\def\plus{{\boldsymbol{\texttt{+}}}}
\newcommand*{\N}{\mathds{N}}
\newcommand*{\R}{\mathds{R}}
\newcommand*{\C}{\mathds{C}}
\newcommand{\Bo}{\mathbf{B}}
\newcommand{\Do}{\mathbf D}
\newcommand{\Lo}{\mathbf L}
\newcommand{\Ao}{\mathbf A}
\newcommand{\So}{\mathbf S}
\newcommand{\X}{\mathbb X}
\newcommand{\Y}{\mathbb Y}
\newcommand{\nn}{\boldsymbol  \Phi}
\newcommand{\signal}{x}
\newcommand{\data}{y}
\newcommand{\reg}{\mathcal{R}}
\newcommand{\Qreg}{\mathcal{Q}}
\newcommand{\beg}{\mathcal{B}}
\newcommand{\fun}{\mathcal{F}}
\newcommand{\risk}{\mathcal{L}}
\DeclarePairedDelimiter{\abs}{\lvert}{\rvert}
\DeclarePairedDelimiter{\norm}{\lVert}{\rVert}
\DeclarePairedDelimiter{\inner}{\langle}{\rangle}
\DeclareMathOperator{\Id}{Id}
\DeclareMathOperator{\prox}{prox}
\DeclareMathOperator{\range}{ran}
\DeclareMathOperator{\dom}{dom}
\DeclareMathOperator{\ran}{ran}
\DeclareMathOperator*{\argmin}{arg\,min}
\newcommand{\al}{\alpha}
\newcommand{\la}{\lambda}
\newtheorem{theorem}{Theorem}
\newtheorem{remark}[theorem]{Remark}
\newtheorem{definition}[theorem]{Definiton}
\newtheorem{assumption}[theorem]{Assumption}
\newtheorem{lemma}[theorem]{Lemma}
\numberwithin{equation}{section}
\numberwithin{theorem}{section}
\colorlet{lred}{red!40}
\colorlet{lgreen}{green!40}
\colorlet{lblue}{blue!40}
\newcommand{\kl}[1]{\left(#1\right)}
\newcommand\set[1]{{\left\{#1\right\}}}
\author{Markus Haltmeier\thanks{Corresponding author: \texttt{markus.haltmeier@uibk.ac.at}}}
\author{Richard Kowar}
\author{Markus Tiefenthaler}
 \date{}
\affil{Department of Mathematics, University of Innsbruck\authorcr
Technikerstrasse 13, 6020 Innsbruck, Austria}
\title{Data-driven Morozov regularization of inverse problems}
\date{\today}
\begin{document}

\maketitle

\begin{abstract}
The solution of inverse problems is crucial in various fields such as medicine, biology, and engineering, where one seeks to find a solution from noisy observations. These problems often exhibit non-uniqueness and ill-posedness, resulting in instability under noise with standard methods. To address this, regularization techniques have been developed to balance data fitting and prior information. Recently, data-driven variational regularization methods have emerged, mainly analyzed within the framework of Tikhonov regularization, termed Network Tikhonov (NETT). This paper introduces Morozov regularization combined with a learned regularizer, termed DD-Morozov regularization. Our approach employs neural networks to define non-convex regularizers tailored to training data, enabling a convergence analysis in the non-convex context with noise-dependent regularizers. We also propose a refined training strategy that improves adaptation to ill-posed problems compared to NETT's original strategy, which primarily focuses on addressing non-uniqueness. We present numerical results for  attenuation correction in photoacoustic tomography, comparing DD-Morozov regularization with NETT using the same trained regularizer, both with and without an additional total variation regularizer.

\medskip \noindent 
\textbf{Key words:} Inverse problems, learned regularizer, convergence analysis,  Morozov regularization, neural networks. 

\medskip \noindent
\textbf{MSC codes:} 65F22; 68T07
\end{abstract}

\section{Introduction} \label{sec:introduction}

In this paper, we consider the solution of linear inverse problems where we aim to reconstruct the unknown $\signal \in \X$ from noisy data
\begin{equation}\label{eq:ip}
	\data_\delta = \Ao \signal + \eta_\delta \,.
\end{equation}
Here $\Ao \colon \X \to \Y$ is a linear bounded operator between Hilbert spaces $\X$ and $\Y$, and $\eta_\delta $ is the data error that satisfies $\norm{\eta_\delta} \leq \delta$ with noise level  $\delta \geq 0$.  We are especially interested in the ill-posed case where solving \eqref{eq:ip} without prior information is non-unique or unstable. After discretization, this leads to linear systems of equations with perturbed right hand side and a forward matrix that possesses a large kernel, many small singular values, or both. Several applications in medical image reconstruction, nondestructive testing, and remote sensing are instances of such linear inverse problems \cite{EnHaNe96,NaWu01,ScGrGrHaLe09}. 

Characteristic features of inverse problems are the non-uniqueness of solutions and the unstable dependence of solutions on data perturbations.  To account for these two issues, one must apply regularization methods  (see, for example, \cite{ito2014inverse,EnHaNe96,ScGrGrHaLe09,benning2018modern,haltmeier2020regularization,schuster2012regularization,ivanov2013theory,morozov1993regularization,tikhonov1977solutions}) that serve two main purposes: First, in the case of exact data $\data \in \range(\Ao)$, they select a specific solution $\Bo_0 (\data)$ among all possible solutions of the exact data equation $\data = \Ao x$.  Second, to account for noise, they define stable approximations to $\Bo_0$ in the form of continuous mappings $\Bo_\alpha \colon \Y \to \X$ that converge to $\Bo_0$ as $\al \to 0$ in an appropriate sense.  

\subsection{Morozov regularization}

There are several well established methods  for the stable solution of inverse problems. A general class of regularization  methods are variational regularization methods which includes Tikhhonov regularization, Ivanov regularization (the method of quasi solutions), and  Morozov regularization (the residual method) as special cases. In Tikhonov regularization, approximate solutions are defined  as minimizers of  
$ \norm{\Ao \signal- \data_\delta}^2/2  + \alpha \reg(\signal)$, where   $\reg \colon \X \to [0, \infty]$ is a regularization functional that measures the feasibility of a potential solution and $\alpha$ is the regularization parameter. Ivanov regularization considers minimizers of $\norm{\Ao \signal- \data_\delta}  $ over the set   $\{   \signal \in \X \mid  \reg(\signal) \leq \tau \}$ for some  $\tau >0$.   In this paper, we consider Morozov regularization where approximate solutions defined as solutions of   
\begin{equation}\label{eq:morozov}
     \min_{\signal \in \X} \reg(\signal) \quad  \text{s.t. }\quad  \norm{\Ao \signal- \data_\delta} \leq \delta\,.
\end{equation}
Compared to Tikhonov regularization  and Ivanov regularization, the latter has the advantage that no additional regularization parameter has to be selected, which is typically a difficult issue. Relations between Tikhonov regularization, Ivanov regularization and Morozov regularization are carefully studied in~\cite{ivanov2013theory}. A general convergence analysis of Morozov regularization, including convergence rates, has been carried out in \cite{grasmair2011residual}.

Note that variational regularization methods are designed to approximate $\reg$-minimizing solution of $\Ao \signal = \data $ for the limit $\delta \to 0$, defined as elements  in $ \argmin  \{ \reg(\signal) \mid  \Ao \signal = \data\}  $. This addresses the non-uniqueness in the case of exact data.  To account  for   noisy data $\data_\delta$,  Morozov regularization  relaxes the strict data consistency $\Ao \signal = \data$  to data proximity $\norm{\Ao \signal- \data_\delta} \leq \delta$.  Even in the case that $\Ao$ is injective, different  regularization terms behave differently and significantly affect  convergence. Therefore the choice of the regularizer is crucial and a nontrivial issue. Classical choices for the
regularizers are the squared Hilbert space norm $\reg(\signal) =  \norm{\signal}^2/2$ or the
$\ell^1$-norm   $\reg(\signal) = \sum_{\la \in \Lambda} \abs{\inner{u_\la, \signal}}$, where $(u_\la)_\la$ is a frame of $\X$. These regularizers may not be optimally adapted to highly structured signal classes, as is often the case in practical applications.  In this paper, we address this issue and propose a data-driven regularizer using neural networks adapted to the signal class represented by training data in combination with Morozov regularization.

\subsection{Neural network regularizers}

In this paper, we study Morozov regularization with a neural network based,  data-driven and noise-level-dependent regularizer  
\begin{equation} \label{eq:reg1}
	\reg_\delta ( \signal )
            = \frac{1}{2} \norm{ \nn_{\theta(\delta)} (\signal) - \signal}^2
                  + \lambda \,\Qreg ( \signal )  \,.
\end{equation}   
Here $\nn_{\theta(\delta)} \colon \X \to \X$ is a neural network tuned to noisy data, and $\Qreg \colon \X \to [0, \infty]$ is an additional regularization term.  While $\Qreg$ is mainly added for theoretical purposes to guarantee coercivity, in the numerical results we show that it also has a beneficial influence on the reconstruction.  For the theoretical analysis the squared   Hilbert  space norm $\Qreg = \norm{\cdot}^2$ would be sufficient. We refer to \eqref{eq:morozov} with the data-driven regularizer \eqref{eq:reg1} instead of the fixed regularizer $\reg$ as  data-driven Morozov (DD-Morozov)  regularization. We will show that, under reasonable assumptions, the latter provides a convergent regularization method. Furthermore, we present a training strategy for selecting the noise-dependent neural network for a given architecture.  Note that a noise-level-independent network is also included in our analysis. Allowing the networks to depend on $\delta$, however, enables the networks to better adapt to available noisy training data.

Besides stabilizing the signal reconstruction, the main purpose of a particular regularizer is to fit the reconstructions to a certain set where the true signals are likely to be contained. In reality, this set is not known analytically, but it is possible to draw examples from it. For this reason, we follow the learning paradigm and choose training signals $\signal_1, \dots, \signal_N \in \X $ and adapt the architecture $(\nn_\theta)_{\theta \in \theta}$ to $\signal_i$ and corresponding noisy measurements . More precisely, $\theta = \theta(\delta)$ is chosen such that $\nn_\theta( \signal_i ) \simeq \signal_i $ and $\nn_\theta( \signal_i + r_{i,j}) \simeq \signal_i $, where $r_{i,j}$ are certain perturbations that are allowed to depend on the forward operator and on the the noise. Thus, $\reg_\delta$ has small values for the exact $\signal_i $ and larger values for the perturbed signals $\signal_i + r_{i,j}$. Since the training signals $\signal_i$ are only taken from a certain potentially small subset of $\X$, it is difficult to obtain the necessary coercive condition from training alone. Therefore, it seems natural to add another regularization term $\Qreg$ which is known to be coercive. 

In Section~\ref{sec:train} we propose a possible choice for the   perturbations $r_{i,j}$ resulting in a novel training strategy for the regularizer.  This refines  the  training strategy of the NETT paper \cite{li2018nett} and seems  more suitable for ill-posed and ill-conditioned problems. In the original NETT strategy, all added perturbations are in the null space of the forward operator, and the task of the learned regularizer was purely the selection of a proper solution in the limit. The refined training strategy refers to the added perturbations $r_{i,j}$ that might have components outside the null space of the forward operator. In the case of several small singular values the proposed  regularizer can correct for signal components damped by  small singular values with an  amount determined by the noise level.

While learned regularizers have recently become  popular in the context of Tikhonov regularization \cite{li2018nett,obmann2021augmented,lunz2018adversarial,goujon2023neural}, we are not aware of any work utilizing the Morozov variant.  In fact, our analysis as well as the training strategy are related to the Network Tikhonov Approach (NETT) of \cite{li2018nett,obmann2021augmented}. A very different strategy for learning a network regularizer has been proposed in \cite{lunz2018adversarial} in the context of adversarial regularization. Other approaches for learning a regularizer are the fields of experts model \cite{roth2005fields}, deep total variation \cite{kobler2020total} or ridge regularizers \cite{goujon2023neural}.    Other data-driven regularization methods for inverse problems can be found for example in \cite{arridge2019solving,aspri2020data,aspri2020dataB,haltmeier2020regularization,mukherjee2021end,riccio2022regularization,schwab2019deep,dittmer2020regularization} and the references therein.   From the theoretical side, Morozov regularization in a general non-convex context has been studied in \cite{grasmair2011residual}. The analysis we present below allows the regularizer to be noise-dependent and further we derive strong convergence under  total nonlinearity condition of~\cite{li2018nett}.

\subsection{Outline}

The remainder of this paper is organized as follows. In Section~\ref{sec:theory} we present our theoretical results. In particular we present the convergence analysis (Section~\ref{sec:analysis}) and the prosed training strategy (Section~\ref{sec:train}). In Section~\ref{sec:results} we present numerical results illustrating DD-Morozov regularization  with and without TV as additional regularizer.   Specifically, we test our approach  on an inverse problem for one-dimensional  attenuation correction in photoacoustic tomography in damping media \cite{kowar2011attenuation} and compare DD-Morozov regularization  with and without TV to NETT regularization and pure TV regularization.  To numerically solve  \eqref{eq:morozov} we implement the primal dual scheme of \cite{condat2013primal}.   The paper concludes with a short summary in Section~\ref{sec:conclusion}.

\section{Theory}
\label{sec:theory}

Throughout this paper, $\X$, $\Y$  are Hilbert spaces and $\Ao \colon \X \to \Y$ a bounded linear operator  with potentially non-trivial nullspace. Recall that a functional $\reg \colon \X \to [0, \infty]$ is coercive, if $\reg(\signal_n) \to  \infty$ for all sequences
$(\signal_n)_{n\in\N} \in \X^\N$ with $\|\signal_n\|_\X  \to \infty$, and weakly lower semicontinuous, if $\reg(\signal) \leq \liminf_{n\to \infty} \reg(\signal_n)$ for 
$(\signal_n)_{n\in\N} \rightharpoonup \signal$, where $\rightharpoonup$  denotes weak convergence, and $\to$ strong convergence. Any element in $\argmin \set{\reg(\signal) \mid  \Ao \signal  = \data }$ is called an $\reg$-minimizing solution of the equation $\Ao\signal = \data$. 

\subsection{Convergence analysis}
\label{sec:analysis}

For neural networks $\nn, \nn_{\theta(\delta)}$ on $\X$ and noise level $\delta >0$ we define the noise-dependent regularizer $\reg_\delta$ by \eqref{eq:reg1},  the limiting regularizer by $\reg ( \signal ) = \norm{ \nn (\signal) - \signal}^2/2 + \lambda \,\Qreg ( \signal )$ and consider DD-Morozov regularization
\begin{equation}\label{eq:morozov1}
     \min_{\signal \in \X} \reg_\delta(\signal) \quad  \text{s.t. }\quad  \norm{\Ao \signal- \data_\delta} \leq \delta \,.
\end{equation}
Our  results on the convergence of  \eqref{eq:reg1}, \eqref{eq:morozov1} are derived under the following conditions, which we assume to be satisfied throughout this subsection.

\begin{assumption}\mbox{}\label{ass:well}
\begin{enumerate}[label=(A\arabic*), leftmargin=3em, topsep=0em, itemsep=0em]
\item \label{a1} $\nn,  \nn_{\theta(\delta)} \colon \X \to \X$ are weakly continuous.
\item \label{a2}  $\nn_{\theta(\delta)}  \to \nn $ weakly uniformly on bounded  sets as $\delta \to 0$. 
\item \label{a2a}  $\nn_{\theta(\delta)}  \to \nn $ strongly pointwise on  $\reg$-minimizing solutions as $\delta \to 0$. 
\item \label{a3} $\Qreg \colon \X \to [0, \infty]$ is proper, coercive and weakly lower semicontinuous.    
\end{enumerate}
\end{assumption}

In \ref{a2},  weak uniform convergence  on bounded  sets  means that for  all bounded  $B \subseteq \X$ and all $h \in \X$ we have $\sup_{\signal  \in B} \abs{\inner{ \nn_{\theta(\delta)}(\signal)  - \nn (\signal),h}} =0$ as $\delta \to 0$.   In the convergence analysis we assume that the networks $\nn_{\theta(\delta)}$ are trained, where $\theta(\delta)$ potentially depends on the noise level, and that the forward operator  $\Ao$  and the additional regularizer $\Qreg$ are given by the application or are user-specified. In many applications, $\norm{
(\nn_{\theta(\delta)} - \Id) (\cdot) }$ may not be coercive which is the reason to add the term  $\Qreg$ in \eqref{eq:reg1}.

\begin{lemma}\label{lem:R}
The regularizers  $\reg, \reg_\delta$ are coercive and weakly sequentially lower semicontinuous. Further, the feasible set $\{\signal \in \X \mid \norm{\Ao \signal- \data_\delta}  \leq \delta \}$ is weakly closed and non-empty for all $\delta >0$ and all data $\data_\delta$ with $\norm{\Ao \signal_\star - \data_\delta } \leq \delta $ for some $\signal_\star \in \dom(\Qreg)$. 
\end{lemma}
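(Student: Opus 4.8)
The plan is to verify the three claims in turn, in each case splitting the regularizer into its network term and its $\Qreg$ term and invoking Assumption~\ref{ass:well} together with standard Hilbert space facts; I will argue for $\reg_\delta$, the case of $\reg$ being verbatim the same with $\nn_{\theta(\delta)}$ replaced by $\nn$. For \emph{coercivity} I would simply note that the network term $\tfrac12\norm{\nn_{\theta(\delta)}(\signal)-\signal}^2$ is nonnegative and $\lambda>0$, so that $\reg_\delta(\signal)\geq\lambda\,\Qreg(\signal)$ for all $\signal\in\X$; coercivity of $\Qreg$ from~\ref{a3} then forces $\reg_\delta(\signal_n)\to\infty$ whenever $\norm{\signal_n}\to\infty$. (This is exactly why $\Qreg$ is included; the network term need not be coercive by itself.)

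For \emph{weak sequential lower semicontinuity}, let $\signal_n\rightharpoonup\signal$. Weak continuity~\ref{a1} gives $\nn_{\theta(\delta)}(\signal_n)\rightharpoonup\nn_{\theta(\delta)}(\signal)$, and adding $-\signal_n\rightharpoonup-\signal$ yields $\nn_{\theta(\delta)}(\signal_n)-\signal_n\rightharpoonup\nn_{\theta(\delta)}(\signal)-\signal$. Since $\signal\mapsto\tfrac12\norm{\signal}^2$ is convex and strongly continuous, hence weakly lower semicontinuous, it follows that
\[
  \tfrac12\norm{\nn_{\theta(\delta)}(\signal)-\signal}^2 \;\leq\; \liminf_{n\to\infty}\tfrac12\norm{\nn_{\theta(\delta)}(\signal_n)-\signal_n}^2 \,.
\]
Adding weak lower semicontinuity of $\Qreg$ from~\ref{a3} and using superadditivity of $\liminf$, i.e.\ $\liminf a_n+\liminf b_n\leq\liminf(a_n+b_n)$, yields $\reg_\delta(\signal)\leq\liminf_n\reg_\delta(\signal_n)$.

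For the \emph{feasible set} $C_\delta:=\{\signal\in\X\mid\norm{\Ao\signal-\data_\delta}\leq\delta\}$, I would observe that it is the preimage of the closed $\delta$-ball around $\data_\delta$ in $\Y$ under the continuous affine map $\signal\mapsto\Ao\signal$, hence convex and strongly closed, and a convex strongly closed subset of a Hilbert space is weakly closed; equivalently, $\signal\mapsto\norm{\Ao\signal-\data_\delta}$ is weakly lower semicontinuous ($\Ao$ being weak-to-weak continuous and the norm weakly lower semicontinuous), so its sublevel set $C_\delta$ is weakly closed. Non-emptiness is immediate from the hypothesis: the given $\signal_\star\in\dom(\Qreg)$ satisfies $\norm{\Ao\signal_\star-\data_\delta}\leq\delta$, i.e.\ $\signal_\star\in C_\delta$; the additional requirement $\signal_\star\in\dom(\Qreg)$ is what further guarantees $\reg_\delta(\signal_\star)<\infty$, a fact needed in the subsequent existence statement.

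I do not expect a genuine obstacle here: the statement is a routine assembly of classical facts, and the only problem-specific inputs are the weak continuity of the networks~\ref{a1} — needed to pass the composition $\norm{\nn_{\theta(\delta)}(\cdot)-(\cdot)}$ through a weak limit — and the coercivity of $\Qreg$ in~\ref{a3}, which carries the coercivity of the whole functional. The one mildly delicate point is asserting weak \emph{closedness} of $C_\delta$ rather than mere weak \emph{sequential} closedness; this is settled by convexity, and in any event only weak sequential closedness is required for the direct-method argument used later.
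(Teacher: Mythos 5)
Your proposal is correct and follows essentially the same route as the paper: coercivity from $\reg_\delta \geq \lambda\,\Qreg$ with \ref{a3}, weak sequential lower semicontinuity from weak continuity of the network \ref{a1} plus weak lower semicontinuity of the norm and of $\Qreg$, and weak closedness of the feasible set as a sublevel set of the weakly lower semicontinuous map $\signal \mapsto \norm{\Ao\signal - \data_\delta}$, with non-emptiness witnessed by $\signal_\star$. Your additional observation that the feasible set is convex and strongly closed (hence weakly closed) is a harmless alternative justification of the same point, and your phrasing of non-emptiness ($\signal_\star$ itself lies in the set) is in fact slightly cleaner than the paper's wording.
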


\begin{proof}
 Let $(\signal_n)_{n\in\N} \in \X^\N$.
Because $\Qreg$ is coercive and $\norm{ \nn (\signal) - \signal}^2$ , $ \norm{ \nn_{\theta(\delta)} (\signal) - \signal}^2$  are non-negative,  the functionals   $\reg$, $\reg_\delta$  are  coercive. Let $(\signal_n)_{n\in\N} \in \X^\N$ converge weakly to  $\signal \in \X$. Because  $\nn$  is  weakly continuous, 
$( \nn(\signal_n) - \signal_n)_{n\in\N}$ converges weakly to $\nn(\signal) - \signal$. Due to the  weak  sequential lower semicontinuity of the norm, we infer $  \| \nn(\signal) - \signal \|_\X
     \leq \liminf_{n\to\infty} \| \nn(\signal_n) - \signal_n \|_\X$ which shows that  $ \reg$ and in a similar manner $ \reg_\delta$ are weakly lower semicontinuous.  Now, according  to~\cite[Lemma~1.2.3]{BlBr92}, a functional  $\fun$ is weakly sequentially lower semicountinuous  if and only if   $ \{ \signal\in\X \mid  \fun(\signal) \leq t \}$ is weakly sequentially closed for all  $t>0$. Because $\Ao$ is linear and bounded it is  weakly continuous.  Because the norm is weakly sequentially lower semicontinuous,  $\signal  \mapsto \norm{\Ao \signal - \data_\delta} $ is weakly sequentially lower semicontinuous, too.  Hence  $\{\signal \in \X \mid \norm{\Ao \signal- \data_\delta}  \leq \delta \}$ is weakly closed for all $\delta >0$ and non-empty as it contains  the exact data $\Ao \signal_\star$.
\end{proof}

\begin{lemma}[Existence]\label{lem:existence}
For all data $\data_\delta \in \Y$ with $\norm{\Ao \signal - \data_\delta } \leq \delta $ for some $\signal \in \dom(\Qreg)$, the constraint optimization problem~\eqref{eq:morozov1} has at least one solution.    
\end{lemma}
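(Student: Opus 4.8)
The plan is to apply the direct method of the calculus of variations, relying entirely on the structural properties already established in Lemma~\ref{lem:R}. Write $C \coloneqq \{\signal \in \X \mid \norm{\Ao \signal - \data_\delta} \leq \delta\}$ for the feasible set and $m \coloneqq \inf_{\signal \in C} \reg_\delta(\signal)$ for the optimal value. The first thing I would check is that the problem is non-degenerate: by hypothesis there is some $\signal_\star \in \dom(\Qreg)$ with $\norm{\Ao \signal_\star - \data_\delta} \leq \delta$, so $\signal_\star \in C$ and $\reg_\delta(\signal_\star) = \norm{\nn_{\theta(\delta)}(\signal_\star) - \signal_\star}^2/2 + \lambda\, \Qreg(\signal_\star) < \infty$; hence $C \neq \emptyset$ and $0 \leq m < \infty$.

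Next I would pick a minimizing sequence $(\signal_n)_{n \in \N} \subseteq C$ with $\reg_\delta(\signal_n) \to m$. Since $\reg_\delta$ is coercive by Lemma~\ref{lem:R} and $(\reg_\delta(\signal_n))_{n\in\N}$ is bounded, the sequence $(\signal_n)_{n\in\N}$ must be bounded in $\X$: otherwise a subsequence would have norms tending to infinity, forcing $\reg_\delta$ to diverge along it and contradicting boundedness. Because $\X$ is a Hilbert space, bounded sequences admit weakly convergent subsequences, so after passing to a subsequence we may assume $\signal_n \rightharpoonup \hat\signal$ for some $\hat\signal \in \X$.

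Finally, the weak closedness of $C$ (Lemma~\ref{lem:R}) yields $\hat\signal \in C$, and the weak sequential lower semicontinuity of $\reg_\delta$ (Lemma~\ref{lem:R}) yields $\reg_\delta(\hat\signal) \leq \liminf_{n\to\infty} \reg_\delta(\signal_n) = m$. Combined with $\hat\signal \in C$, which forces $\reg_\delta(\hat\signal) \geq m$ by definition of $m$, this gives $\reg_\delta(\hat\signal) = m$, i.e. $\hat\signal$ solves~\eqref{eq:morozov1}.

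I do not expect a genuine obstacle here: Lemma~\ref{lem:R} was precisely engineered so that this textbook compactness-plus-lower-semicontinuity argument goes through. The only point requiring a moment's care is the finiteness of $m$, which is exactly where the hypothesis $\signal_\star \in \dom(\Qreg)$ (rather than merely $\signal_\star \in \X$) is needed; without it the minimizing sequence might only satisfy $\reg_\delta(\signal_n) \to +\infty$ and the statement would be vacuous.
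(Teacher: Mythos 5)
Your proposal is correct and follows essentially the same route as the paper: a direct-method argument using the coercivity, weak sequential lower semicontinuity, and weak closedness established in Lemma~\ref{lem:R} to extract a weakly convergent minimizing subsequence whose limit is feasible and optimal. Your explicit remark that $\signal_\star \in \dom(\Qreg)$ guarantees finiteness of the infimum is a small point the paper leaves implicit, but it does not change the argument.
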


\begin{proof}
Because $\reg_\delta \geq 0$, the infimum $M$ of $\reg$ over $ S_\delta \coloneqq \{\signal \in \X \mid \norm{\Ao \signal- \data_\delta}  \leq \delta \}$  is nonnegative and there exists a sequence $(\signal_m)_m$ of elements of $S_\delta$ with $ \lim_{m\to\infty} \reg_\delta(\signal_m) = M$.   Because $\reg_\delta$ is coercive and $\{ \reg_\delta(\signal_m)\,|\,m\in\N\}$ is bounded, we infer that
$(\signal_m)_m$ is bounded and thus there  exists a weakly convergent  subsequence
$(\signal_{m(k)})_k$ converging to some $\signal \in \X$. Moreover, due to the weak closedness of $S_\delta$  we  obtain $\signal\in S_\delta$.  Because $\reg_\delta$  is weakly lower semicontinuous, $\reg(\signal) \leq  M$ and thus $\signal$  is a solution of \eqref{eq:morozov1}. 
\end{proof}

Analogous to the proof of Lemma~\ref{lem:existence} one shows that  there exists  at least one $\reg$-minimizing solution  of  $\Ao\signal = \data$ whenever it  is  solvable in $\dom(\Qreg)$.

\begin{theorem}[Weak convergence]\label{thm:weak}
Let $\Ao\signal = \data$ be solvable in $\dom(\Qreg)$, $(\data_n)_{n\in\N} \in \Y^\N$ satisfy $\norm{ \data - \data_n} \leq \delta_n$, where
$(\delta_n)_{n\in\N} \in (0, \infty)^\N$ with $\delta_n \to 0$, write $\reg_n \coloneqq \reg_{\delta_n} $ and $\nn_n \coloneqq \nn_{\theta(\delta_n)}$, and choose $ \signal_n \in \argmin \{\reg_n (z) \mid \norm{ \Ao z - \data_n} \leq \delta_n \}$.  Then  $(\signal_n)_{n\in\N}$ has at least one weak accumulation point $\signal^\plus\in \X$.
Moreover, the limit of each weakly converging subsequence $(\signal_{n(k)})_{k\in\N}$ is an $\reg$-minimizing solution of $\Ao \signal = \data$ and  $\reg_{n(k)} (\signal_{n(k)}) \to \reg(\signal^\plus)$ for $k\to\infty$. If the $\reg$-minimizing solution $\signal^\plus$ of $\Ao \signal = \data$ is unique, then $\signal_n \rightharpoonup \signal^\plus$ and $\reg_n (\signal_n) \to \reg(\signal^\plus)$ as $n\to\infty$.
\end{theorem}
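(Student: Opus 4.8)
The plan is to run the standard three-step compactness argument for variational regularization — uniform boundedness of the minimizers, feasibility of weak subsequential limits, and optimality of these limits — while carrying the noise-dependent regularizer $\reg_n$ along and invoking Assumption~\ref{ass:well} to pass to the limit. Throughout, I would fix an arbitrary $\reg$-minimizing solution $\signal_\star$ of $\Ao\signal=\data$; such an element exists in $\dom(\Qreg)$ by the remark following Lemma~\ref{lem:existence}, since $\Ao\signal=\data$ is solvable in $\dom(\Qreg)$. Because $\Ao\signal_\star=\data$ and $\norm{\data-\data_n}\le\delta_n$, the point $\signal_\star$ is feasible for the $n$-th problem, so $\reg_n(\signal_n)\le\reg_n(\signal_\star)$. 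By \ref{a2a} we have $\nn_n(\signal_\star)\to\nn(\signal_\star)$ strongly, hence $\reg_n(\signal_\star)\to\reg(\signal_\star)<\infty$, so in particular $(\reg_n(\signal_n))_{n}$ is bounded. Combining $\reg_n(\signal_n)\ge\lambda\,\Qreg(\signal_n)$ with the coercivity of $\Qreg$ from \ref{a3} shows that $(\signal_n)_{n}$ is bounded; being bounded in a Hilbert space it has a weakly convergent subsequence, and therefore at least one weak accumulation point $\signal^\plus$.

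Next I would check that any weak subsequential limit is feasible for the exact equation. If $\signal_{n(k)}\rightharpoonup\signal^\plus$, then $\Ao\signal_{n(k)}\rightharpoonup\Ao\signal^\plus$ by boundedness and (weak) continuity of $\Ao$, while $\data_{n(k)}\to\data$; thus $\Ao\signal_{n(k)}-\data_{n(k)}\rightharpoonup\Ao\signal^\plus-\data$, and weak lower semicontinuity of the norm gives $\norm{\Ao\signal^\plus-\data}\le\liminf_{k}\norm{\Ao\signal_{n(k)}-\data_{n(k)}}\le\liminf_{k}\delta_{n(k)}=0$, so $\Ao\signal^\plus=\data$.

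The core is a generalized weak lower semicontinuity for the moving regularizer: whenever $z_k\rightharpoonup z$ in $\X$, then $\reg(z)\le\liminf_{k}\reg_{n(k)}(z_k)$. To see this, note that $(z_k)_k$ is bounded, so by \ref{a2} we get $\inner{\nn_{n(k)}(z_k)-\nn(z_k),h}\to0$ for every $h\in\X$, that is $\nn_{n(k)}(z_k)-\nn(z_k)\rightharpoonup0$; combined with $\nn(z_k)\rightharpoonup\nn(z)$ from \ref{a1} this yields $\nn_{n(k)}(z_k)-z_k\rightharpoonup\nn(z)-z$, and weak lower semicontinuity of the norm together with that of $\Qreg$ (plus $\liminf(a_k+b_k)\ge\liminf a_k+\liminf b_k$ and monotonicity of $t\mapsto t^2$) gives the claim. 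Applying it to $z_k=\signal_{n(k)}$ gives $\reg(\signal^\plus)\le\liminf_{k}\reg_{n(k)}(\signal_{n(k)})$, while the feasibility bound $\reg_{n(k)}(\signal_{n(k)})\le\reg_{n(k)}(\signal_\star)\to\reg(\signal_\star)$ (again by \ref{a2a}) gives $\limsup_{k}\reg_{n(k)}(\signal_{n(k)})\le\reg(\signal_\star)$. Hence $\reg(\signal^\plus)\le\reg(\signal_\star)$; since $\signal^\plus$ itself solves $\Ao\signal=\data$, the reverse inequality holds by definition of $\signal_\star$, so $\signal^\plus$ is $\reg$-minimizing and the chain of inequalities collapses, forcing $\reg_{n(k)}(\signal_{n(k)})\to\reg(\signal^\plus)$.

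Finally, if the $\reg$-minimizing solution $\signal^\plus$ is unique, then every weakly convergent subsequence of the bounded sequence $(\signal_n)_n$ has limit $\signal^\plus$, and a standard subsequence-of-subsequences argument upgrades this to $\signal_n\rightharpoonup\signal^\plus$; applying the generalized lower semicontinuity claim and \ref{a2a} to the full sequence then squeezes $\reg(\signal^\plus)\le\liminf_n\reg_n(\signal_n)\le\limsup_n\reg_n(\signal_n)\le\lim_n\reg_n(\signal^\plus)=\reg(\signal^\plus)$. I expect the main obstacle to be precisely the bookkeeping in the generalized lower semicontinuity step: one must pass to the limit in $\nn_{n(k)}(\signal_{n(k)})$ where \emph{both} the network and its argument vary, so \ref{a1} and \ref{a2} are what drive the $\liminf$ (lower) bound, whereas \ref{a2a} — strong convergence only on $\reg$-minimizing solutions — is exactly what is available for the matching $\limsup$ (upper) bound through the comparison element $\signal_\star$; keeping these two roles cleanly separated, and checking that $\signal_\star$ is admissible for \ref{a2a}, is the crux.
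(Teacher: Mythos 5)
Your proposal is correct and follows essentially the same route as the paper's proof: comparison with an $\reg$-minimizing solution $\signal_\star$ plus coercivity of $\Qreg$ for boundedness, weak lower semicontinuity of $\norm{\Ao(\cdot)-\data}$ for feasibility of weak limits, and the $\liminf$/$\limsup$ sandwich driven by \ref{a1}--\ref{a2} on the varying argument and \ref{a2a} at $\signal_\star$, followed by the standard uniqueness argument. Isolating the moving-regularizer lower semicontinuity as an explicit claim is a clean packaging of exactly the step the paper performs inline, so no substantive difference.
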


\begin{proof}
Set $S_n \coloneqq  \{z \in \X \mid \norm{\Ao z - \data_n}  \leq \delta_n\}$ and $S_\star \coloneqq \{z \in \X \mid  \Ao z = \data \}$. Clearly  $S_\star \subseteq S_n$  and because  $\Ao\signal = \data$ is solvable, $S_\star$ is non-empty.   Thus $ \Qreg (\signal_n) +  
 \norm{\nn_n(\signal_n)-\signal_n}^2/2 =
 \reg_n (\signal_n)  \leq  \reg_n (\signal_\star)  =  \Qreg (\signal_\star) +  
 \norm{\nn_n(\signal_\star)-\signal_\star}^2/2$, where $\signal_\star$ is an $\reg$-minimizing solution of $\Ao \signal =\data$.  
 From the weak convergence  of $\nn_n(\signal_\star)$ we see that the right hand side is bounded. Thus $ \Qreg (\signal_n)$ is bounded and with the coercivity  of $ \Qreg$ we conclude there exists a weakly converging subsequence $(\signal_{n(k)})_{k \in \N}$.  Because $ \|\Ao(\cdot) - \data\|$ is  weakly lower semicontinuous,  
\begin{multline} \label{eq:conv1}
    \|\Ao(\signal^\plus) - \data\| 
      \leq \liminf_{k\to\infty}  \| \Ao (\signal_{n(k)}) - \data\|   
      \\ \leq \liminf_{k\to\infty}  \| \Ao (\signal_{n(k)}) - \data_{n(k)}\|  + \|\data_{n(k)} - \data\|  
      \leq 2 \delta_{n(k)} 
 \end{multline}
and thus  $\signal^\plus\in S_\star$. It remains to verify that   $\signal^\plus$  is an $\reg$-minimizing solution of $\Ao \signal =  \data$. According to  \ref{a1}, \ref{a2}  we have $\nn_{n(k)}(\signal_{n(k)})  \rightharpoonup \nn(\signal^\plus)$ and  $\nn_n (\signal_\star)  \to \nn (\signal_\star) $ as the  weak  uniform convergence assumption \ref{a2} implies that  for all bounded $B \subseteq \X$ and all  $z \in \X$ we have  $\sup_{x \in B} \inner{\nn_{n(k)}(x)-\nn(x), z} \to 0$. Thus
\begin{align*} 
       \Qreg(\signal^\plus) + \frac{\la}{2} \norm{\nn (\signal^\plus) -\signal^\plus }
       &\leq 
       \liminf_{k \to \infty} \Qreg(\signal_{n(k)}) + 
       \frac{\la}{2}
       \norm{\nn_{n(k)} (\signal_{n(k)}) -\signal_{n(k)} }
       \\& \leq 
       \limsup_{k \to \infty}
       \Qreg(\signal_{n(k)}) + \frac{\la}{2} \norm{\nn_{n(k)} (\signal_{n(k)}) -\signal_{n(k)} } 
       \\& \leq 
       \limsup_{k \to \infty}
       \Qreg(\signal_\star) + \frac{\la}{2} \norm{\nn_{n(k)} (\signal_\star) - \signal_\star } 
       \\& = 
       \Qreg(\signal_\star) + \frac{\la}{2} \norm{\nn (\signal_\star) - \signal_\star }  \,.
\end{align*}
Therefore $\signal^\plus$ is   an $\reg$-minimizing solution of $\Ao \signal = \data$ with  $\reg (\signal_{n(k)}) \to \reg(\signal^\plus)$. Finally, if  the $\reg$-minimizing solution $\signal^\plus$ of $\Ao \signal = \data$ is unique, then $(\signal_n)_{n \in \N}$ has exactly one weak accumulation point  $ \signal^\plus$ and $\reg_n (\signal_n) \to \reg(\signal^\plus)$.
\end{proof}

Following \cite{li2018nett}, we introduce the concept of total nonlinearity, which is required for strong convergence.

\begin{definition}[Total nonlinearity]
Let $\fun \colon \X\to\R$ be G\^ateaux differentiable at $\signal_\star\in\X$. The absolute Bregman distance $\beg_\fun(\signal_\star, \cdot) \colon \X\to[0,\infty]$  and the modulus of total nonlinearity $\nu_\fun(\signal_\star,\cdot):(0,\infty)\to[0,\infty]$ of $\fun$ at $\signal_\star$  are defined by   
\begin{align} \label{eq:bregman}
    \forall \signal \in\X \colon & \quad
    \beg_\fun(\signal_\star,\signal) \coloneqq  |\fun (\signal) - \fun(\signal_\star) - \fun'(\signal_\star)(\signal-\signal_\star)|  \\
    \forall t>0  \colon & \quad    
    \nu_\fun(\signal_\star,t) \coloneqq  \inf \{ \beg_\fun(\signal_\star,\signal) \,|\, \signal \in\X \wedge  \| \signal - \signal_\star\|_\X = t \} \,.
\end{align}
The functional $\fun$ is called totally nonlinear at $\signal_\star$, if $\nu_\fun(\signal_\star, t)>0$ for all $t\in (0,\infty)$.
\end{definition}

According to \cite{li2018nett}, $\fun$  is  totally nonlinear at $\signal_\star \in\X$ if and only if for all bounded sequences $(\signal_n)_{n\in\N} \in \X^\N$ with $ \lim_{n\to \infty} \beg_\fun(\signal_\star,\signal_n) = 0  $ we have  $\lim_{n\to\infty} \|\signal_n-\signal_\star \|_\X = 0$.

\begin{theorem}[Stong convergence] \label{thm:strong}
In the  situation of Theorem~\ref{thm:weak} assume additionally that  the $\reg$-minimizing solution $\signal^\plus$ of $\Ao \signal = \data$ is unique and that $\reg$ is totally nonlinear at $\signal^\plus$. Then, $ \|\signal_n - \signal^\plus\|_\X \to 0$ as $n\to\infty$.
\end{theorem}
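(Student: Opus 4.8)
The plan is to leverage Theorem~\ref{thm:weak} together with the characterization of total nonlinearity via the absolute Bregman distance. From Theorem~\ref{thm:weak} we already know that, under the uniqueness hypothesis, $\signal_n \rightharpoonup \signal^\plus$ and $\reg_n(\signal_n) \to \reg(\signal^\plus)$. The sequence $(\signal_n)_{n\in\N}$ is bounded (this is established inside the proof of Theorem~\ref{thm:weak} via the coercivity of $\Qreg$ and the bound $\reg_n(\signal_n) \le \reg_n(\signal_\star)$). By the stated characterization, it therefore suffices to show $\beg_\reg(\signal^\plus, \signal_n) \to 0$, i.e.\ that $\abs{\reg(\signal_n) - \reg(\signal^\plus) - \reg'(\signal^\plus)(\signal_n - \signal^\plus)} \to 0$.

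The argument then splits into two pieces. First, $\reg(\signal_n) \to \reg(\signal^\plus)$: this is \emph{not} immediate from Theorem~\ref{thm:weak}, which only gives $\reg_n(\signal_n) \to \reg(\signal^\plus)$ with the noise-dependent regularizer. I would bridge the gap using assumption~\ref{a2}: since $(\signal_n)$ is bounded, weak uniform convergence of $\nn_{\theta(\delta_n)}$ to $\nn$ on that bounded set, combined with the weak continuity of $\nn$ (\ref{a1}) and the fact that $\signal_n \rightharpoonup \signal^\plus$, lets me control $\norm{\nn_n(\signal_n) - \signal_n}^2 - \norm{\nn(\signal_n) - \signal_n}^2$; together with the weak lower semicontinuity already used and a matching upper-bound estimate (comparing with $\signal_\star = \signal^\plus$ in the constraint inequality, exactly as in the chain of inequalities in the proof of Theorem~\ref{thm:weak}), one gets $\reg(\signal_n) \to \reg(\signal^\plus)$. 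Second, $\reg'(\signal^\plus)(\signal_n - \signal^\plus) \to 0$: since $\reg$ is G\^ateaux differentiable at $\signal^\plus$, the functional $\reg'(\signal^\plus)$ is a bounded linear functional on $\X$, and $\signal_n - \signal^\plus \rightharpoonup 0$, so this term vanishes by definition of weak convergence.

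Combining the two pieces gives $\beg_\reg(\signal^\plus, \signal_n) \to 0$ along the bounded sequence $(\signal_n)$, and total nonlinearity of $\reg$ at $\signal^\plus$ then forces $\norm{\signal_n - \signal^\plus}_\X \to 0$, which is the claim.

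The main obstacle I anticipate is the first piece — upgrading $\reg_n(\signal_n) \to \reg(\signal^\plus)$ to $\reg(\signal_n) \to \reg(\signal^\plus)$. One must carefully handle the cross term in $\norm{\nn_n(\signal_n) - \signal_n}^2 = \norm{\nn_n(\signal_n)}^2 - 2\inner{\nn_n(\signal_n), \signal_n} + \norm{\signal_n}^2$ versus the analogous expansion with $\nn$; the inner products $\inner{\nn_n(\signal_n), \signal_n}$ and $\inner{\nn(\signal_n), \signal_n}$ are products of two weakly-convergent factors, so weak convergence alone does not pass to the limit, and the $\norm{\signal_n}^2$ term need not converge either. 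The cleanest route is to avoid computing $\reg(\signal_n)$ outright and instead directly estimate $\beg_\reg(\signal^\plus,\signal_n)$: write $\reg(\signal_n) - \reg(\signal^\plus) - \reg'(\signal^\plus)(\signal_n-\signal^\plus) = \bigl(\tfrac12\norm{\nn(\signal_n)-\signal_n}^2 - \tfrac12\norm{\nn(\signal^\plus)-\signal^\plus}^2 - G'(\signal^\plus)(\signal_n-\signal^\plus)\bigr) + \lambda\bigl(\Qreg(\signal_n) - \Qreg(\signal^\plus) - \Qreg'(\signal^\plus)(\signal_n-\signal^\plus)\bigr)$, where $G(\cdot) = \tfrac12\norm{\nn(\cdot)-\Id(\cdot)}^2$, and bound each parenthesized block nonnegatively; then use $\liminf \reg(\signal_n) \ge \reg(\signal^\plus)$ (weak lsc) and $\limsup \reg(\signal_n) \le \reg(\signal^\plus)$ (comparison plus \ref{a2a}) to squeeze each block to zero. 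The G\^ateaux differentiability of $\reg$ at $\signal^\plus$, hence of both $G$ and $\Qreg$ there, is part of the hypothesis implicit in "totally nonlinear at $\signal^\plus$," so this decomposition is legitimate.
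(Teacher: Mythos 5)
Your overall strategy is the same as the paper's: Theorem~\ref{thm:weak} gives $\signal_n \rightharpoonup \signal^\plus$ and convergence of the regularizer values, boundedness of $(\signal_n)_{n\in\N}$ comes from the coercivity argument, the linear term $\reg'(\signal^\plus)(\signal_n-\signal^\plus)$ vanishes because $\reg'(\signal^\plus)$ is a bounded linear functional and $\signal_n-\signal^\plus \rightharpoonup 0$, hence $\beg_\reg(\signal^\plus,\signal_n)\to 0$, and the characterization of total nonlinearity along bounded sequences yields $\norm{\signal_n-\signal^\plus}_\X \to 0$. The paper's proof is exactly this short argument; it reads $\reg(\signal_n)\to\reg(\signal^\plus)$ directly off Theorem~\ref{thm:weak} (the final sentence of that proof asserts $\reg(\signal_{n(k)})\to\reg(\signal^\plus)$, even though the theorem statement records the noise-dependent version $\reg_{n(k)}(\signal_{n(k)})\to\reg(\signal^\plus)$). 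So the wrinkle you flag --- that the Bregman distance needs the limiting regularizer evaluated at $\signal_n$, not $\reg_n(\signal_n)$ --- is a genuine subtlety which the paper glosses over rather than resolves; spotting it is to your credit.

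However, your proposed patch does not actually close that gap. In your ``cleanest route'' you claim the two parenthesized blocks can be bounded nonnegatively: nonnegativity of such Bregman-type differences requires convexity of $G=\tfrac12\norm{\nn(\cdot)-\Id(\cdot)}^2$ and of $\Qreg$, neither of which is assumed --- the setting is deliberately non-convex, which is precisely why the absolute Bregman distance carries an absolute value. Moreover, G\^ateaux differentiability of the sum $\reg$ at $\signal^\plus$ does not imply differentiability of $G$ and $\Qreg$ separately, so the block decomposition is not licensed by the hypotheses. Most importantly, the limsup ingredient you invoke (``comparison plus \ref{a2a}'') only yields $\limsup_n \reg_n(\signal_n) \le \reg(\signal^\plus)$, since the minimality comparison is with respect to $\reg_n$; converting this into $\limsup_n\reg(\signal_n)\le\reg(\signal^\plus)$ requires controlling $\norm{\nn(\signal_n)-\signal_n}^2-\norm{\nn_n(\signal_n)-\signal_n}^2$, which is exactly the cross-term problem you yourself identify for your first route: assumption \ref{a2} controls pairings $\inner{\nn_n(x)-\nn(x),h}$ only for a fixed $h$, not against the $n$-dependent vector $\nn_n(\signal_n)+\nn(\signal_n)-2\signal_n$. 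So the second route relabels the gap rather than closing it. If you want more rigor than the paper offers at this point, you need either a strengthened convergence assumption (e.g.\ $\sup_{x\in B}\norm{\nn_{\theta(\delta)}(x)-\nn(x)}\to 0$ on bounded sets, which makes $\reg(\signal_n)-\reg_n(\signal_n)\to 0$ immediate) or to proceed as the paper does and take $\reg(\signal_n)\to\reg(\signal^\plus)$ as delivered by Theorem~\ref{thm:weak}; with that in hand, your remaining steps coincide with the paper's proof and are correct.
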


\begin{proof}
According to Theorem~\ref{thm:weak}, the sequence $(\signal_n)_{n\in\N}$ converges weakly to $\signal^\plus$ and $\reg(\signal^\plus) =  \lim_{n\to\infty} \reg(\signal_n)$. Because $\reg'(\signal^\plus)$ is bounded, $ \reg'(\signal^\plus)(\signal_n - \signal^\plus)  \to 0$ and thus 
$ \beg_{\reg}(\signal^\plus,\signal_n) =  |\reg(\signal_n) - \reg(\signal^\plus)
                     - \reg'(\signal^\plus)(\signal_n - \signal^\plus)| \to  0$. Because  $(\signal_n)_{n\in\N}$ is bounded with the  total nonlinearity of $\reg$ this yields  $\signal_n  \to  \signal^\plus$.
\end{proof}

\subsection{Training strategy}
\label{sec:train}

Given a sequence of noise levels $(\delta_n)_{n\in \N}$, our aim is  to construct the data-driven regularizer $ \norm{\nn_n (\signal) - \signal }^2/2$ with neural networks $\nn_n$ adapted to training signals $\signal_i \in \X$ for $ i \in I \coloneqq \set{1, \dots, N}$ that we consider as ground truth and  corresponding perturbed signals $\signal_i + r_{i,n,j} \in \X$ for $j \in J_{i,n}$ that we want to avoid. Given a family  $(\nn_{\theta})_{\theta \in \Theta}$,  we determine the parameter $\theta = \theta_n$  as the minimizer of          
\begin{equation} \label{eq:risk}
    \risk_n( \theta ) \coloneqq  \sum_{i\in I} \sum_{ j \in J_{i,n}^\star} \norm{ \nn_\theta(\signal_i + r_{i,n,j}) - \signal_i }^2    \,,
\end{equation}
where $J_{i,n}^\star  \coloneqq J_{i,n} \cup \{0\} $ and $r_{i,n,0} \coloneqq 0$ for the ground truth signals.  Notice that the index $i$ indicates the training example, the index $n$ the noise level, and the index $j$ refers to the perturbations for a given training example and noise level.

By doing so, we have  $(\nn_n - \Id) ( \signal_i )   \simeq  0$ for the ground truth signals $x_i$ and  $(\nn_n - \Id) ( \signal_i+ r_{i,n,j} ) \simeq r_{i,n,j} $ for the perturbed signals $\signal_i + r_{i,n,j}$.  Hence the regularizer $ \norm{ (\nn_n - \Id) ( \cdot ) }  $ is expected to be small for signals similar to $\signal_i$ and large for signals  similar to $\signal_i + r_{i,n,j}$.  A specific feature of  our learned regularizer is that it can depend on the forward problem. This is achieved by making the perturbations $r_{i,n,j}$ operator specific. A strategy for  increasing  this dependence is to let the architecture depend on $\Ao$  such as a null space  network  \cite{schwab2019deep} or data-proximal  network \cite{goppel2023data}.

\begin{remark}[Choice of the perturbations] \label{rem:perturbations}
A crucial question is how to construct proper perturbed signals $\signal_i + r_{i,n,j} \in \X$ for $j \neq 0$. For NETT, we proposed in \cite{li2018nett} to choose a single perturbation $r_{i,n,1} = r_i^{\ast} := (\Id - \Ao^\plus \Ao)(\signal_i) \in \ker(\Ao)$ per training example, which is also independent of the noise. This choice is well suited to address non-uniqueness, a primary issue in undersampled tomographic inverse problems where the kernel $\ker(\Ao)$ has high dimensionality. In this work, we are also concerned with ill-posed and ill-conditioned problems where small singular values pose an additional challenge, alongside the possibly large kernel. As a result, we modify the training strategy of \cite{li2018nett} by introducing multiple perturbations $r_{i,n,j}$ that address two additional factors: some perturbations represent noise in the low-frequency components corresponding to large singular values, while others reflect damped high-frequency components of the signal corresponding to small or vanishing singular values.
\end{remark}

Let $(u_n, v_n, \sigma_n)_{n \in \N}$ denote a singular value decomposition (SVD) of $\Ao$. We allow the operator $\Ao$ to have a nontrivial kernel $\ker(\Ao) \neq \set{0}$ and a non-dense range $\overline{\ran(\Ao)} \neq \Y$, and we only include the non-vanishing singular values in the definition of $\sigma_n$. In such a situation we can express $\Ao$, its pseudoinverse, and the truncated SVD reconstruction by the formulas
 \begin{align*}
     \Ao (\signal) &= \sum_{n\in \N} \sigma_n \inner{\signal,u_n} v_n
     \\
     \Ao^\plus (y) &=  \sum_{n\in \N}  \sigma_n^{-1}   \langle \data, v_n\rangle u_n 
     \\
     \So_\alpha (y) &=  
     \sum_{\sigma_n^2 \geq \alpha} \sigma_n^{-1}   \langle \data, v_n\rangle u_n \,,
\end{align*}
where $\alpha\geq 0$  is the regularization parameter. 

Now if $\signal_i$ is a given ground  truth  signal  and $\data_{i,n} = \Ao \signal_i + z_{i,n}$ corresponding noisy data, and $\al[n,j]$ for $j \in J_n$ are variable chosen regularization parameters in truncated SVD, we consider perturbed signals    
\begin{equation*}
	\signal_i  + r_{i,n,j} \coloneqq  
	\So_{\alpha[n,j]} ( \Ao \signal_i + z_{i,n}) \,.
\end{equation*} 
In fact, the perturbed signals are truncated SVD regularized reconstructions with perturbations $r_{i,n,j} = \So_{\alpha[n,j]} ( \Ao \signal_i + z_{i,n} ) - \signal_i$. In particular, for $z_{i,n} = 0$ and $\alpha[n,1] =0$, we recover the perturbations $r_i^{\ast} = \Ao^\plus \Ao \signal_i - \signal_i$ proposed in \cite{li2018nett} for NETT, which can be seen as pure artifacts (meaning elements in the null space caused by projecting $\signal_i$ onto $\ker(\Ao)^\bot$). The more general strategy that is proposed here also includes perturbations due to noise and to Gibbs-type artifacts caused by truncation of singular components.

\section{Application}
\label{sec:results}

In this section, we present numerical results for an inverse problem in attenuation correction in photoacoustic tomography (PAT), as described in more detail below. We consider a discrete setting where the operator $\Ao \in \R^{d \times d}$ is a matrix of size $d = 601$ and $\signal \in \R^d$ is the time discretization of a real-valued function defined on the interval $[0,T]$. The additional regularizer $\Qreg$ is taken either as zero or as the total variation (TV). Details on the forward operator, the learned regularizer, and the numerical solution of \eqref{eq:morozov1} are given below.

\subsection{Attenuation correction in PAT using the NSW model}

The problem we have selected for testing the learned Morozov regularization is attenuation correction in PAT. The  corresponding forward  operator has a significant number of large and small singular values, making appropriate regularization highly important. It is 1D and thus numerically efficient and of non-convolutional form, ensuring it is not too simplistic to draw conclusions from. Finally, it is of practical significance, as 1D attenuation correction can serve as a module for two- and three-dimensional image reconstruction, as we argue below.

\paragraph{PAT in attenuating media:} PAT is a hybrid imaging  technology  based on the conversion of optical energy into  acoustic pressure waves, and  combines the high spatial resolution of ultrasound imaging with the high contrast of optical imaging. A general model for PAT in attenuating media is (see \cite{kowar2012photoacoustic,kowar2011attenuation}) 
\begin{equation}  \label{eq:wavealpha}
	\kl{ \Do_\al  +
	\frac{1}{c_0}\frac{\partial}{\partial t }  }^2  p_\al(x,t) -
    \Delta p_\al(x,t)    =  \delta'(t) h(x)
	\quad \text{ for }(x,t) \in \R^D \times \R \,.
\end{equation}
Here  $D \in \set{1,2,3}$ is the  spatial dimension, $h \colon \R^D \to \R$ is the photoacoustic (PA) source, $c_0 > 0$ is a constant, and $\Do_\al$ is the time convolution operator  associated with the inverse Fourier transform of the complex valued attenuation function $\al \colon \R \to \C$.
The image reconstruction problem  associated to \eqref{eq:wavealpha} consists in reconstructing the source $h$ from measurements of $p_\al$ recorded at detector positions  $x$ on an $(D-1)$-dimensional manifold. The  case $\al = 0$ corresponds  to absence of attenuation where several efficient and accurate inversion methods exist including explicit inversion formulas~\cite{finch2007inversion,finch2004determining,haltmeier2015universal,
kunyansky2007explicit,natterer2012photo,palamodov2012uniform,xu2005universal} or iterative approaches
\cite{deanben2012accurate,paltauf2002iterative,rosenthal2013current,arridge2016adjoint,belhachmi2016direct,haltmeier2016iterative,huang2013full}.

\paragraph{The NSW model:}  
A variety of attenuation models exist, depending on the form of the complex-valued function $\alpha$; causal models are of particular interest \cite{kowar2012photoacoustic,kowar2011attenuation}. In this paper, we work with the causal model of Nachman, Smith, and Waag (NSW model) with a single relaxation process \cite{nachman1990equation}, for which the complex attenuation function takes the form
\begin{equation} \label{eq:nsw}
   \alpha(\omega) =  \frac{(-i\,\omega)}{c_\infty}\,\left(
      \frac{c_\infty}{c_0}\,\sqrt{  \frac{1 + ({c_0}/{c_\infty})^2\,(-i\,\tau_1\,\omega)}{1 + (-i\,\tau_1\,\omega) }}   -1 \right) 
      \,,
\end{equation}
with  parameters $c_0$, $\tau_1$, $c_\infty$. Equation \eqref{eq:nsw} and its generalization using $N$ relaxation processes have been derived in~\cite{nachman1990equation} based on sound physical principles.

\paragraph{Attenuation correction:} 
According to \cite[Theorem~1 and Lemma~1]{kowar2010integral}, the attenuated pressure signals can be expressed in terms of the unattenuated ones by
\begin{align}  \label{eq:Mpalpha}
p_\alpha(x,t)
&= \int_0^t  m_\alpha(t,r) p_0(x,r) \, dr \,, 
\\ \label{eq:Mpalpha2}
\mathcal{F} (m_\alpha  (\cdot,r)  )(\omega)
&= \frac{\omega}{ \omega/c_0 + i \alpha(\omega)}
\, e^{i (\omega / c_0  + i \alpha(\omega)) \abs{r}} \,,
\end{align}
for $r,t > 0$, $\omega \in \R$ and $x \in \R^D$. Here, $p_\alpha$ and $p_0$ denote the causal solutions of \eqref{eq:wavealpha} with and without attenuation, respectively, and $\mathcal{F}$ is the Fourier transform in the first argument. This shows that the PA source $h$ can be recovered by first solving the 1D reconstruction problem \eqref{eq:Mpalpha} for $p_0$ and then recovering $h$ from the estimated unattenuated signals. Such a two-step method has been proposed and implemented for the power law in~\cite{lariviere2005image,lariviere2006image}, and later used in~\cite{ammari2012photoacoustic,kowar2012photoacoustic} for various attenuation laws. We refer to the inversion of \eqref{eq:Mpalpha} as 1D attenuation correction.

\paragraph{Note on the ill-posedness:} 
The analysis of \cite{elbau2017singular} on the singular values of PAT in attenuating media shows that reconstructing $h$ based on the NSW model shares the same asymptotic behavior of singular values as the inversion problem in PAT without attenuation. Consequently, in the continuous setting, the 1D attenuation correction problem defined by \eqref{eq:Mpalpha} is expected to be stable. However, the singular values are predicted to decay rapidly at first, as the attenuation of a pressure-emitting source increases with both $\omega$ and the distance from the observer. The singular value plot for the discretized operator in Figure \ref{fig:A} confirms this expectation and demonstrates that the discrete problem is ill-conditioned.

\begin{figure}[htb!]
\begin{center}
\includegraphics[height=0.23\textwidth]{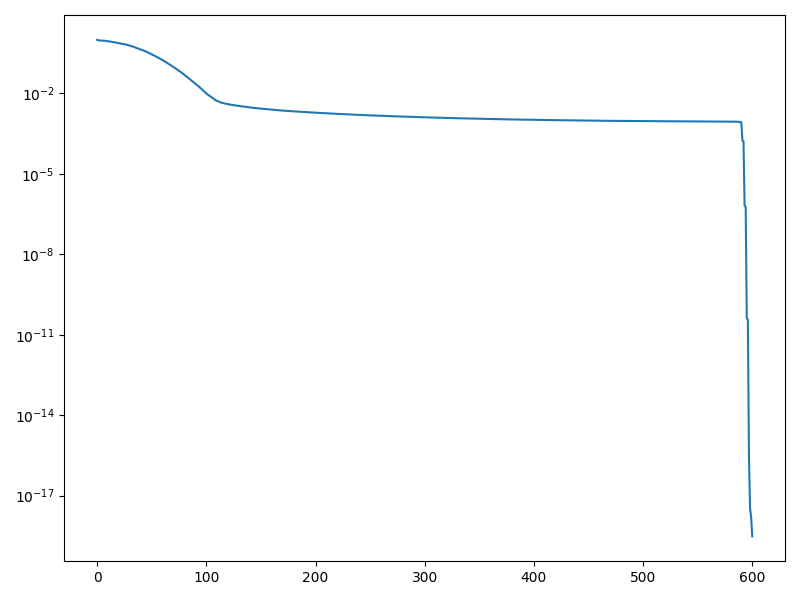}
\includegraphics[height=0.23\textwidth]{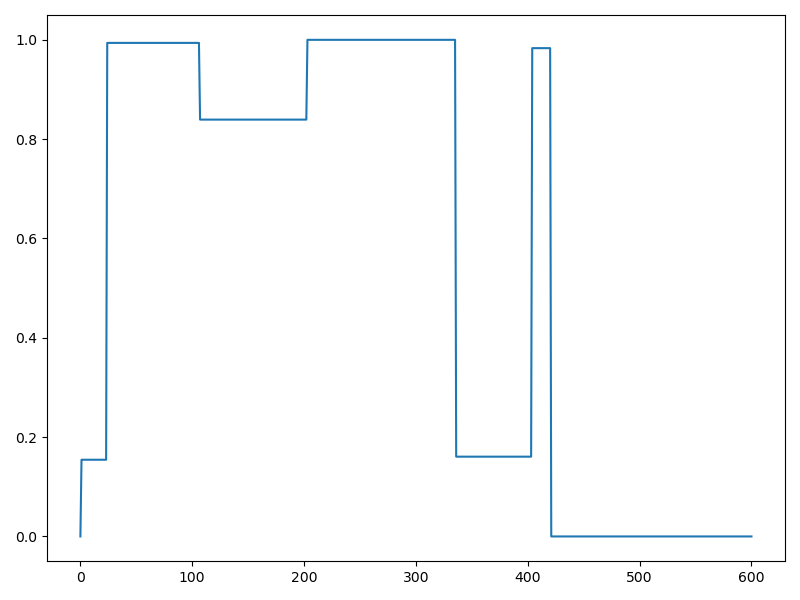}
\includegraphics[height=0.23\textwidth]{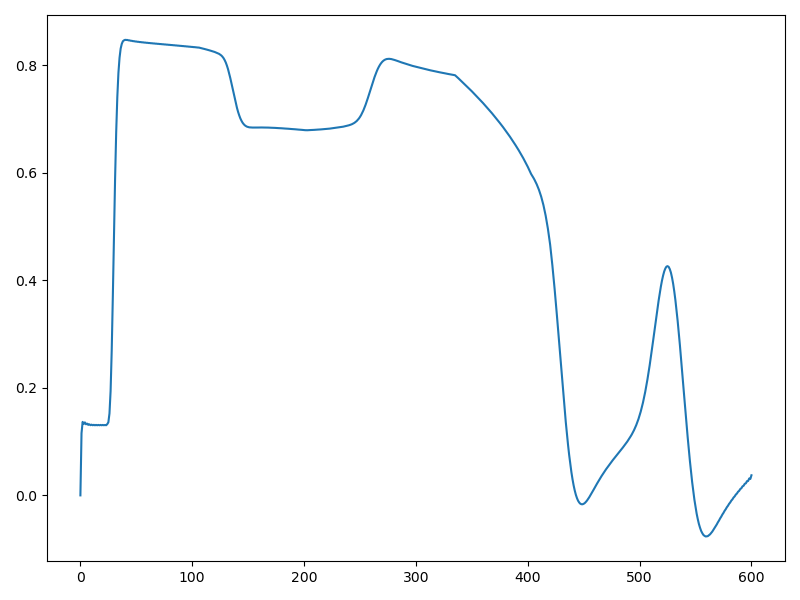}
\caption{Left: Singular values of  the $\Ao  \in\R^{d \times d}$ modeling dissipation  on a logarithmic scale. Middle: Test signal $\signal \in \R^d$. Right: Exact data   $\Ao \signal$ for input  from  the middle picture. The horizontal axis in the middle and right images represents time.}
\label{fig:A}
\end{center}
\end{figure}

\subsection{Implementation details}

\paragraph{Forward operator:} 
The forward operator \(\Ao\) is taken as a discretization of the 1D integral operator defined by \eqref{eq:Mpalpha} that maps unattenuated pressure signals to attenuated signals.  We use the NSW model~\eqref{eq:nsw} with parameters \(c_0 = 1\), \(\tau_1 = 10^{-4}\), and \(c_\infty = 1.41\) calculated over the temporal interval \([0, 0.1]\). Its operation is illustrated in Figure~\ref{fig:A} (center and right). For details on how the matrix \(\Ao\) was computed, see \cite{HaKoNg17}. Due to the fast decay of the singular values of \(\Ao\) (Figure~\ref{fig:A}, left), the solution of \eqref{eq:ip} in this case is severely ill-conditioned. Moreover, the operator is not of convolutional form, and the ill-conditioning increases for signal components corresponding to later times. This can be seen in the right image in Figure~\ref{fig:A}, where the right part of the signal is significantly more blurred than the left.

\paragraph{Network architecture:} As the network architecture \((\nn_\theta)_{\theta \in \Theta}\), we utilize a one-dimensional version of the 2D U-Net \cite{ronneberger2015u} with  skip connection. The architecture begins with several convolutional blocks, each consisting of two 1D convolutional layers with ELU activation, where the number of filters doubles at each block, starting from 16. Downsampling is achieved using strided convolutions, which halve the spatial size of the feature maps at each stage. Upon reaching the bottleneck, the network employs transposed convolutions (upsampling) to restore the original spatial size. Skip connections between corresponding layers in the encoder and decoder paths are incorporated to retain detailed features. The final output is generated through a concluding convolutional layer, with the option to either add the input signal to the output (thus learning the residuals) or produce the entire signal directly without this addition. Dropout layers are integrated to prevent overfitting and enhance generalization. The design is intentionally kept simple to minimize computational time, as developing a sophisticated  network architecture is not the primary focus of this work.

\paragraph{Network training:} 
For the training signals \(\signal_i\), we utilize a collection of block signals similar to those in~\cite{DoJo94}. We scale the input signals \(x_i\) to have a maximum value of \(1\) and construct noisy data as \(\Ao \signal_i + z_{i,n}\), where \(z_{i,n}\) is normally distributed noise with mean zero and standard deviation proportional to \(\sigma\) and  the mean value of \(\Ao \signal_i\). The full training dataset consists of \(5000\) ground truth signals \(\signal_i\) and \(5000\) perturbed signals for each $n$ and each \(\alpha[n,j] = j / 10\) with \(j \in \{1, \dots, 8\}\). According to Section~\ref{sec:train}, the network is trained by minimizing the risk \(\risk_n( \theta ) = \sum_i  \sum_j \norm{ \nn_\theta(\signal_i + r_{i,n,j}) - \signal_i }^2\). The trained network is then denoted as \(\nn_n \coloneqq \nn_{\theta_n}\), where \(\theta_n\) is the numerical minimizer of \(\risk_n\).

\paragraph{Numerical Morozov regularization:}
Reconstruction is done by numerically solving \eqref{eq:morozov1} with the noise-adaptive data-driven regularizer $\reg_n \coloneqq \norm{(\Id- \nn_n)(\cdot)}^2/2 + \norm{\Lo \signal }_1$, where $\norm{\cdot}_1$ is the $\ell^1$-norm and $\Lo$ is either zero (no additional regularizer) or taken as the discrete central difference operator with Neumann boundary conditions (TV as additional regularizer). For computing numerical solutions we write \eqref{eq:morozov1} in the form 
\begin{equation} \label{eq:morozov2}
   \min_{\signal}  
    \|\nn_n(\signal)-\signal\|_2^2 + \lambda \| \Lo \signal\|_1  + \mathbf{1}_{B} (\Ao \signal)  
\end{equation}
with  $ \mathbf{1}_{B}$ denoting the indicator function of  $B = \{ \data \in \R^d \mid \| \data - \data_\delta \| \le \delta \}$.   Optimization problem \eqref{eq:morozov2} is then solved using the primal dual algorithm of \cite{condat2013primal}. With the abbreviations $f(\signal) := \| \nn_n(\signal ) - \signal \|_2^2$, $h_1 := \la \| \cdot \|_1$ and $h_2 : = \mathbf{1}_{B}$, parameters $\tau, \sigma, \rho > 0$ and initial values $\signal^{(0)}$ and $y^{(0)} =(0,0)$ the proposed reconstruction algorithm reads
\begin{align*} 
z^{(i+1)} &:= x^{(i)} - \tau \nabla f(x^{(i)}) - \tau \Lo^*y_1^{(i)} - \tau \Ao^* y_2^{(i)} 
\\
x^{(i+1)} &:= \rho z^{(i+1)} + (1-\rho)x^{(i)} 
\\
w_1^{(i+1)} &:= \prox_{\sigma h_1^*}(y_1^{(i)} + \sigma \Lo(2 z^{(i+1)} -x^{(i)}))
\\
y_1^{(i+1)} &:= \rho w_1^{(i+1)} + (1- \rho) y_1^{(i)}
\\
w_2^{(i+1)} &:= \prox_{\sigma h_2^*}(y_2^{(i)} + \sigma \Ao(2 z^{(i+1)} -x^{(i)}))
\\
y_2^{(i+1)} &:= \rho w_2^{(i+1)} + (1- \rho) y_2^{(i)} \,.
\end{align*}
Here $\prox$ denotes the proximity mapping and $()^*$ the Fenchel dual. The proximity mappings $\prox_{\sigma h_1^*}$, $\prox_{\sigma h_2^*}$ can be easily computed using the relation $\prox_{h^*} + \prox_{h} = \Id$ and the known expressions for the proximity mappings of $\| \cdot \|_1$ and $\mathbf{1}_{B}$. When \(\mathbf{L} = 0\), the iterative algorithm does not require the \(y_1\) and \(w_1\) updates.

Concerning the convergence of the algorithm, we note that for a linear network \(\nn_n\), the functional \(f\) is convex, and the convergence analysis of \cite[Theorem 3.3]{condat2013primal} can be applied, guaranteeing the convergence of the sequence \((\signal^{(i)}, y^{(i)})_{i \in \mathbb{N}}\). In the general and important case of a non-convex network regularizer, we are not aware of theoretical results that assure the proposed algorithm's convergence. Results similar to those in \cite{valkonen2014primal}, obtained for a related algorithm, seem possible and would be an interesting line of research.

\begin{figure}[htb!]
\begin{center}
\includegraphics[height=0.3\textwidth]{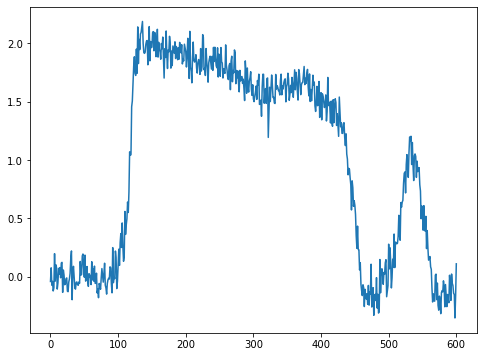}
\includegraphics[height=0.3\textwidth]{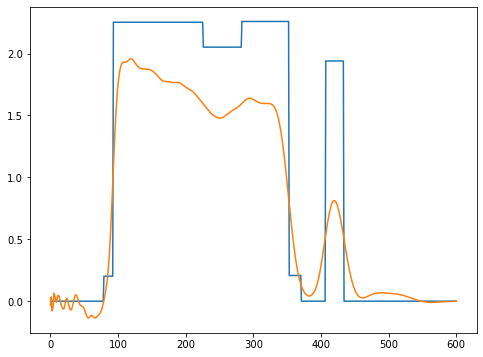}
\includegraphics[height=0.3\textwidth]{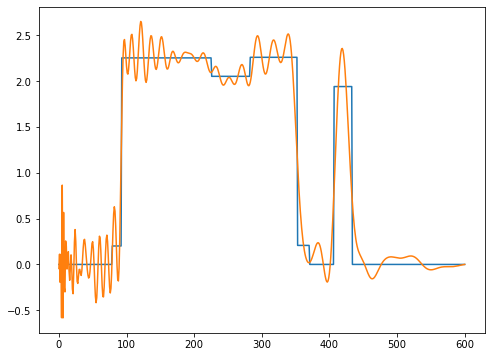}
\includegraphics[height=0.3\textwidth]{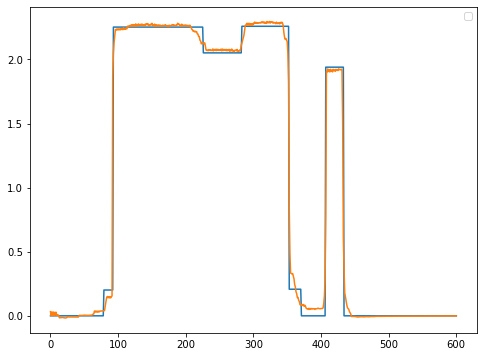}
\caption{Top left: Noisy data $\data_\delta$. Top right:  BP reconstruction. Bottom left:  SVD reconstruction.  Bottom right: DD-Morozov regularization .}
\label{fig:poc}
\end{center}
\end{figure}

\paragraph{Tikhonov regularization:} 
In the results section, we will compare the Morozov-type regularizations with their Tikhonov counterparts, which minimize the Tikhonov functional
\begin{equation} \label{eq:nett}
    \mathcal{T}_{\alpha, \data_\delta}(\signal) := 
    \frac{1}{2} \norm{\Ao \signal - \data_\delta}^2_2 + \alpha \|\nn_n(\signal)-\signal\|_2^2  +  \lambda \| \Lo \signal\|_1 \,.
\end{equation}
The case \(\alpha = 0\) corresponds to standard TV-regularization \cite{acar1994analysis}, and the case \(\lambda = 0\) corresponds to NETT (network Tikhonov) regularization \cite{li2018nett}. Additionally, we present results for the mixed case where both regularizers are active. Minimization of all NETT variants is performed using incremental gradient descent.


\subsection{Results}

\paragraph{Proof of concept:}
Figure~\ref{fig:poc} shows results for a randomly selected block signal $\signal$, which is not part of the training data. The upper left image displays the noisy attenuated signal $\data_\delta$, while the upper right image shows the backprojection (BP) reconstruction $\Ao^T \data_\delta$. The lower left image illustrates the truncated SVD reconstruction $\So_\alpha \data_\delta$ with $\alpha = 0.1$, and the lower right image shows the results using the proposed DD-Morozov regularization with TV as an additional regularizer. The BP reconstruction is noticeably damped, whereas the SVD reconstruction exhibits strong oscillations. The DD-Morozov regularization, as shown by applying methods~\eqref{eq:reg1} and~\eqref{eq:morozov1}, is clearly superior. Similar results have been achieved for other randomly selected test signals.

\begin{table}[htb!]
    \centering
    \begin{tabular}{llll}
        \toprule
                            & TV         & CNN        & TV plus CNN \\
        \midrule
        \multicolumn{4}{l}{$\ell^2$ errors using CNN initialization} \\
        \midrule
        Tikhonov  & $0.0866 \pm 0.045$  & $0.0874 \pm 0.045$  & $0.0866 \pm 0.045$ \\
        Morozov & $0.0877 \pm 0.046$  & $0.0872 \pm 0.045$  & $0.0866 \pm 0.044$ \\
        \midrule 
        \multicolumn{4}{l}{$\ell^2$ errors using zero initialization} \\
        \midrule
        Tikhonov           & $0.100 \pm 0.032$   & $0.107 \pm 0.032$   & $0.100 \pm 0.039$ \\
        Morozov            & $0.090 \pm 0.034$   & $0.108 \pm 0.033$   & $0.103 \pm 0.033$ 
        \\ \bottomrule
    \end{tabular}
    \caption{Comparison of  various variants of Morozov regularization with variants of Tikhonov regularization. CNN-Tikhonov  is the NETT and  TV-Tikhonov is standard TV-regularization. Mean and standard deviation of the  $\ell^2$ reconstruction errors have been computed for 500 samples with  $\delta = 0.1$.}
    \label{tab:comparison}
\end{table}

\paragraph{Comparison with NETT  and TV regularization:}
\label{subsec:comp}

While the results shown in Figure~\ref{fig:poc} demonstrate that DD-Morozov regularization improves reconstructions compared to very simple regularization methods, we next evaluate DD-Morozov regularization with TV as an additional regularizer against DD-Morozov regularization without additional regularization, Morozov with TV, as well as their counterparts minimizing the Tikhonov functional \eqref{eq:nett}. For all methods, we evaluated performance using either an initial estimate from the neural network prediction with truncated SVD as input or zero initialization.  The results shown in Table~\ref{tab:comparison} clearly demonstrate that the network initialization consistently outperforms zero initialization. Additionally, all methods with truncated SVD initialization reduced the mean error  \(0.089 \pm 0.046\) of the initial guess.  The TV-Tikhonov method performed similarly to NETT and NET-Morozov.  Figure~\ref{fig:comparison} illustrates a representative example of signal reconstruction, where the proposed method with TV regularization strikes a good balance between smoothing the signal and preserving fine details.

\begin{figure}[htb!]
    \centering
    \includegraphics[width=\textwidth]{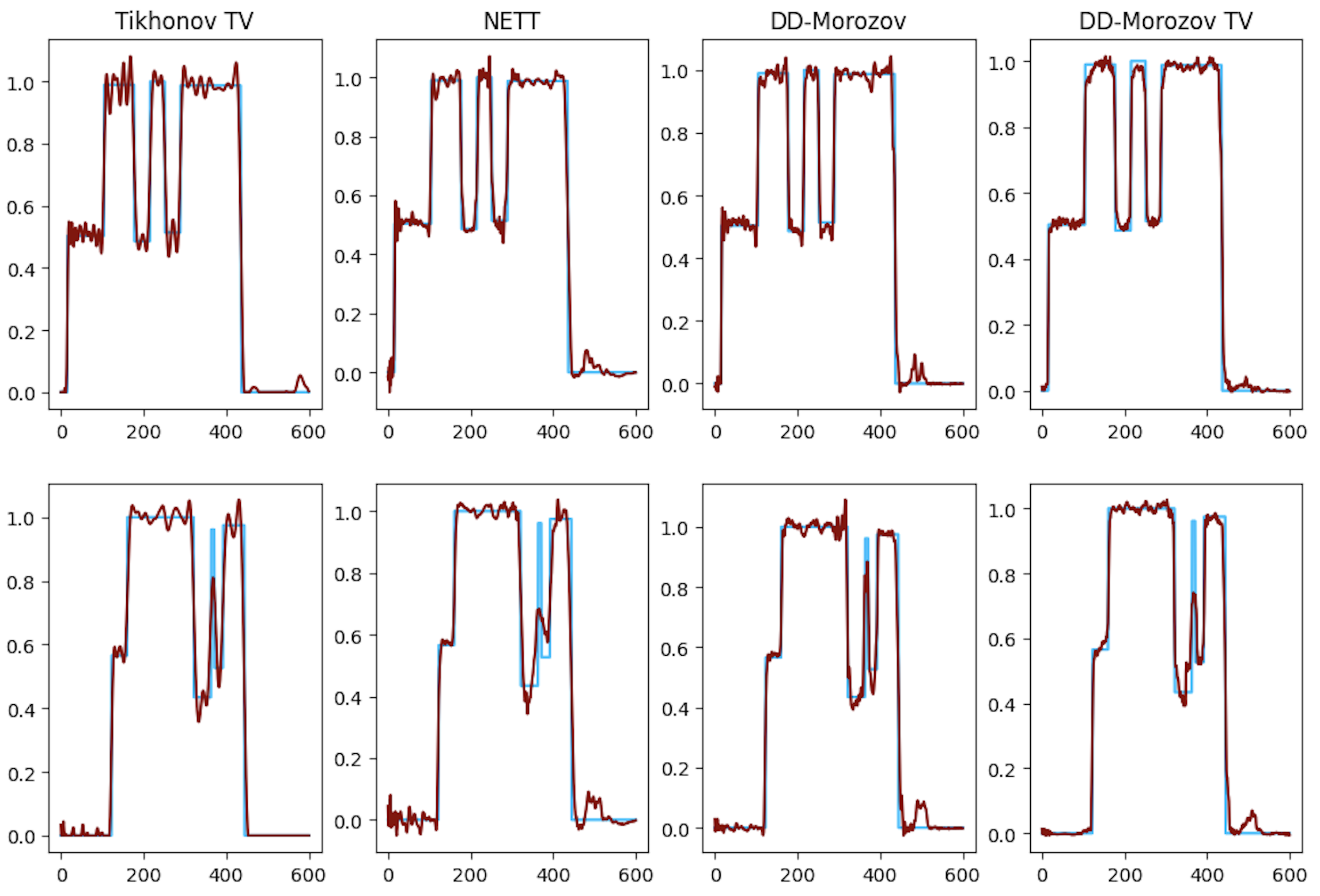}
    \caption{Comparison of several methods using either the network output (top) or the zero signal as initial guess (bottom). The columns from left to right show Tikhonov regularization with TV as the regularizer, the NETT, DD-Morozov without TV, and DD-Morozov with TV as an additional regularizer. For the network as initial guess, the truncated SVD reconstruction applied to noisy data has been used as input for the neural network that has been trained for the regularizer.
}
    \label{fig:comparison}
\end{figure}

\paragraph{Numerical convergence:}

To illustrate  the convergence  stated in Theorem \ref{thm:weak}, we performed numerical simulations under decreasing noise levels. Specifically, we trained six different networks, each exposed to a separate  noise level  with $\sigma \in \{ 0.005, 0.01, 0.05, 0.1, 0.15, 0.2 \}$. We  analyzed the method's average performance across $500$ additional samples while progressively decreasing the noise levels. Our empirical results, shown in Figure~\ref{fig:convergence}, indicate that as the noise levels decrease, the method's performance improves, thus providing empirical support for the theoretical convergence result presented in Theorem~\ref{thm:weak}.

\begin{figure}[htb!]
\centering
\includegraphics[width=0.7\textwidth]{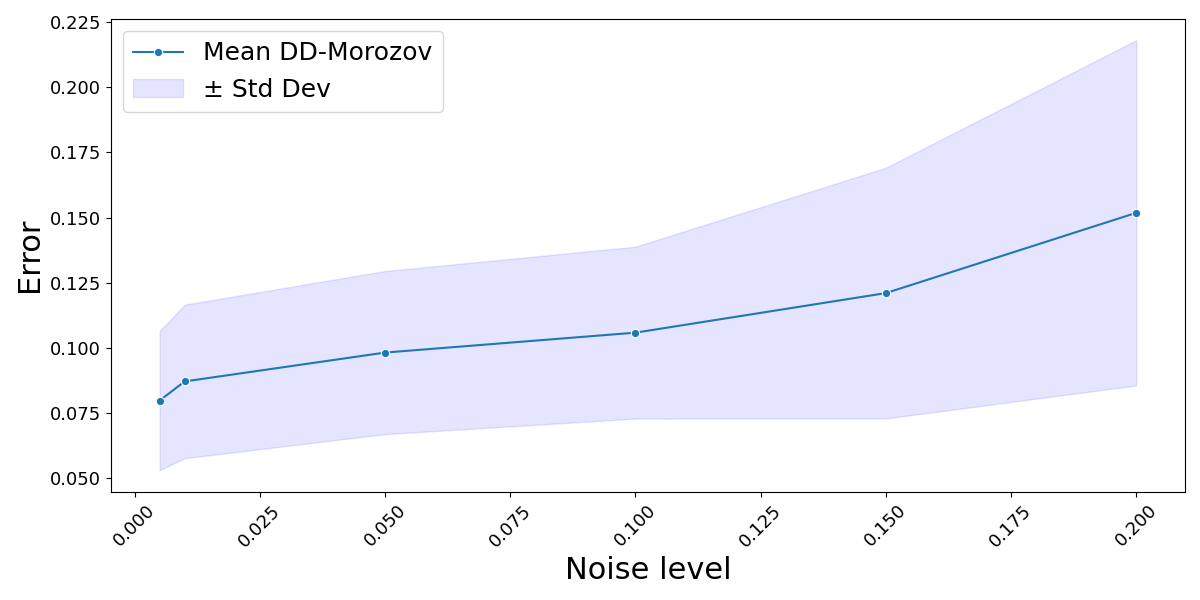}
\caption{Mean value and standard deviation of the $\ell^2$ reconstruction error as a function of the noise level $\delta$.}
\label{fig:convergence}
\end{figure}

\paragraph{Influence of correct noise level:}
Our proposed training strategy requires an assumed noise level. To assess the sensitivity of the trained networks to variations in this levels, we conducted experiments comparing the performance of networks trained at noise levels of $0.01$ and $0.2$ when evaluated on a test dataset with a noise level of $0.1$. The results are shown in Table \ref{tab:noise}. We observe that training the networks with incorrect noise levels leads to a slight decrease in performance. However, the results appear relatively stable, even when the wrong noise level is used for training.

\begin{table}[htb!]
    \centering
    \begin{tabular}{ll}
        \toprule
        Noise level in training & Average $\pm$ standard deviation of $\ell^2$-error \\
        \midrule
        $0.10$ (correct)         & $0.110 \pm 0.033$  \\
        $0.01$ (underestimated) & $0.114 \pm 0.033$  \\
        $0.20$ (overestimated)   & $0.112 \pm 0.032$  \\
        \bottomrule
    \end{tabular}
    \caption{Average reconstruction error and standard deviation of DD-Morozov regularization on the dataset with noise level $0.1$ using networks trained on correct (top)  underestimated (middle; trained with 0.01) and overestimated (bottom; trained with 0.2) noise level.}
    \label{tab:noise}
\end{table}

\paragraph{Theory versus numerics:} The weak convergence result from Theorem~\ref{thm:weak} relies on networks that satisfy the conditions of Assumption~\ref{ass:well}, which appear to be quite natural. Due to the architectural choices, the networks \(\nn\) and \(\nn_{\theta(\delta)}\) are Lipschitz continuous, fulfilling \ref{a1}. Assumptions \ref{a2} and \ref{a2a} are conditions on the  convergence of \(\nn_{\theta(\delta)}\) to some network \(\nn\) as \(\delta \to 0\). Since we use the same architecture and apply perturbations that diminish with decreasing noise levels this behavior is anticipated in our case; a rigorous  proof however still requires careful adjustments to the loss function and proper initialization. This could be an interesting direction for future research, particularly as it applies to trained regularizers in general. Considering that the simulations are carried out in a finite-dimensional, ill-conditioned setting, strong convergence is already implied by weak convergence. However, ensuring that convergence is independent of the discretization requires total convexity (see Theorem~\ref{thm:strong}). In a broader context, a trained regularizer may not always satisfy this assumption. Determining how to guarantee this during training presents another interesting  topic for future research.

\section{Summary}
\label{sec:conclusion}

In this paper, we introduced and analyzed neural network-based noise-adaptive Morozov regularization using a data-driven regularizer (NN-Morozov regularization). We performed a complete convergence analysis that also allows for noise-dependent regularizers. In addition, we established convergence in strong topology. To make our approach practical, we developed a simple yet efficient training strategy extending NETT \cite{li2018nett}. We verified our methodology through numerical experiments, with a special focus on its application to attenuation correction for PAT.  Our research can provide the basis for a broader integration of data-driven regularizers into various variational regularization techniques.

\end{document}